\theoremstyle{definition}
\newtheorem{theorem}{Theorem}[section]
\newtheorem{definition}[theorem]{Definition}
\newcommand\etal{\emph{et~al.}}
\newcommand\acks{\section*{Acknowledgements}}
\begin{document}

\author{Jonathan Fine\\
15 Hanmer Road, Milton Keynes, MK6 3AY, United Kingdom}
\title{Vassiliev-Kontsevich invariants and Parseval's theorem}

\maketitle

\newtheorem{problem}[theorem]{Problem}

\iffalse
% Enter full title and short title for running headers
%\shorttitle{A Demonstration of the \textit{OUP Journal} Class File}

% Enter the publication year and the ID number of the paper
\volumeyear{2009}
\paperID{rnn999}

% Author name(s)
% Abbreviated author name for running headers
\abbrevauthor{J. Fine}
% Abbreviated author name for first page header
\headabbrevauthor{Fine, J}

% Address / e-mail address of corresponding author
\correspdetails{jfine@pytex.org}

% Enter received/revised/accepted dates as necessary
\received{1 January 2009}
\revised{11 January 2009}
\accepted{21 January 2009}

% Enter details of editor communicating this article
\communicated{A. Editor}
\fi

\begin{abstract}% Need this to prevent space sneaking in at start of abstract.
We use an example to provide evidence for the statement: the
Vassiliev-Kontsevich invariants $k_n$ of a knot (or braid) $k$ can be
redefined so that $k = \sum_0^\infty k_n$. This constructs a knot from
its Vassiliev-Kontsevich invariants, like a power series expansion.
The example is pure braids on two strands $P_2\cong \mathbb{Z}$, which
leads to solving $e^\tau=q$ for $\tau$ a Laurent series in $q$. We set
$\tau = \sum_1^\infty (-1)^{n+1} (q^n - q^{-n})/n$ and use Parseval's
theorem for Fourier series to prove $e^\tau=q$.  Finally we describe
some problems, particularly a Plancherel theorem for braid groups,
whose solution would take us towards a proof of $k=\sum_0^\infty k_n$.
\end{abstract}

\section{Introduction}

Throughout we think of knots as being in $\mathbb{R}^3$ and braids as
being elements of a group. Sometimes we will say knot when we mean an
isotopy class of knots, and a braid when we mean a realisation of a
braid in $\mathbb{R}^3$.  Often we will need finite, and sometimes
convergent infinite, formal sum of knots or braids.  The context will
usually make clear which is meant. For example, in $k = \sum_0^\infty
k_n$ the quantity $k$ is the isotopy class of a knot, and each $k_n$
is a convergent formal sums of isotopy classes of knots.  Usually, $b$
will refer to a realisation of a braid.

The Vassiliev-Kontsevich invariant \cite{kont:vki,bn:vki} $b_n=b_n(b)$
of a braid $b$ can be calculated by using the height $h$ to slice the
$b$ into slices and then performing an interated $n$-slice integral
over the simplex $1\geq h_1> \ldots \geq h_n \geq 0$.  The integrand
measures the `twistyness' of the slice, and composition of braids is
used to glue the slices together.  Each invariant $b_n$ lies in a
finite-dimensional vector space, which is usually taken to be a
quotient $V_n/V_{n+1}$ in the Vassiliev filtration (see
\cite{vav:cks,vav:ctd} and Section~3 below).

To calculate $k_n$ of a knot $k$ the same method can be used, except
that the height function $h$ will have critical points, each of which
makes a contribution that is glued into answer.  In this paper we will
use $*$ to denote, as appropriate, either the group law for braids or
the connected sum operator for knots. We can also define $*$ on the
$k_n$. In particular, if $k$ and $k'$ are two knots (or braids) it
then follows that $(k*k')_n = \sum_{i+j=n}k_i * k'_j$.

It is not known if a knot $k$ is determined by its invariants $k_n$.
We approach this problem by finding a space $\mathcal{K}$ which
contains $k$, and then lifting $k_n$ from $V_n/V_{n+1}$ and into
$\mathcal{K}$.  One can then ask if $k =\sum k_n$.  We show that this
approach works for braids on two strands and suggest how it might be
extended to more strands and to knots.

Throughout let $q$ be a generator for the group $P_2\cong\mathbb{Z}$
of pure braids on two strands.  We think of $q$ as two strands
rotating around each other in $\mathbb{R}^3$.  Because each slice is
simply a rotation of any other, the integrand is constant. It follows
that the integrand for $b_n = b_n(q)$ is the $n$-fold $*$-product
$t^n$ of the integrand $t$ for $b_1(q)$. The region of integration is
the unit $n$-simplex, with volume $1/n!$, and so $b_n(q) = t^n/n!$ and
thus at least formally $\sum_0^\infty b_n = e^t$.

For $k=\sum k_n$ to hold, the integrand must be special.  In
particular, it must be a sum of knots (or braids).  Some simple
calculations, which we omit, show that the sum must be infinite and so
questions of convergence arises.  Throughout we will use $\mathcal{K}$
to denote formal infinite sums of (isotopy classes of) knots, whose
coefficients are $L^2$-convergent and similarly $\mathcal{P}_m$ for
$P_m$.  We use $\mathcal{S}\cong S * \mathcal{P}_m$ to denote `pure
braid changes' to a slice $S$ on $m$ strands.

Recall that we wish to solve $q = \sum b_n(q)$, as a special case of
$k = \sum k_n$.  Let us now write $\tau$ for $b_1(q)$.  The problem
now amount to solving $q = e^\tau := \sum \tau^n/n!$ for $\tau$ in a
vector space that also contains $\tau^n$, for $n>1$.  To obtain a
candidate for the solution $\tau\in\mathcal{P}_2$ we use a trick.
Write $p=q^{-1}$.  We can write $q = (1+q)/(1+p)$ and so at least
formally our candidate is $\ln(1+q) - \ln(1+p)$.

\begin{definition}
\label{def:tau}
\begin{equation*}
\tau = \sum_1^\infty (-1)^{n+1}(q^n-p^n)/n \in \mathcal{P}_2
\end{equation*}
Because $\sum_1^\infty 1/n^2$ is absolutely convergent, $\tau$ is in
$\mathcal{P}_2$.  Note that $f(z) = \sum_1^\infty
(-1)^{n+1}(z^n-z^{-n})/n$ is nowhere absolutely convergent.
\end{definition}

\section{Proof of $e^\tau = q$}

Earlier we saw that this is a special case of $k = \sum k_n$.  In this
section we write $\mathcal{P}_2$ as $L^2(\mathbb{Z})$.  We will prove

\begin{theorem}
  \label{thm:q=exp(tau)}
  $\tau \in L^2(\mathbb{Z})$, as defined in Definition~\ref{def:tau},
  satisfies $\exp(\tau) = q$.
\end{theorem}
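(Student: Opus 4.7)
The plan is to exploit Parseval's theorem to convert convolution on $\mathcal{P}_2 = \ell^2(\mathbb{Z})$ into pointwise multiplication on $L^2(S^1)$, thereby reducing $e^\tau = q$ to an elementary identity for Fourier series. First I would identify $\mathcal{P}_2$ isometrically with $L^2(S^1)$ via Fourier series, sending the basis element $q^k \mapsto e^{ik\theta}$. Under this isometry (Parseval), braid composition (convolution on $\mathbb{Z}$) translates to pointwise multiplication of bounded functions, so proving $e^\tau = q$ in $\mathcal{P}_2$ is equivalent to proving $e^{\hat\tau(\theta)} = e^{i\theta}$ as elements of $L^2(S^1)$.

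Next I would compute the image of $\tau$ under this transform. Combining the $q^n$ and $p^n = q^{-n}$ terms gives
\begin{equation*}
\hat\tau(\theta) = \sum_{n=1}^\infty (-1)^{n+1}\frac{e^{in\theta} - e^{-in\theta}}{n} = 2i\sum_{n=1}^\infty (-1)^{n+1}\frac{\sin n\theta}{n},
\end{equation*}
which is the classical Fourier series of the sawtooth $\theta \mapsto i\theta$ on $(-\pi,\pi)$, converging to that function in $L^2$ (and a.e.\ by Dirichlet). In particular $|\hat\tau(\theta)| \leq \pi$ almost everywhere.

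With this boundedness in hand, the exponential is easy. The partial sums $\sum_{n=0}^N (\hat\tau(\theta))^n/n!$ converge uniformly on $(-\pi,\pi)$, and therefore in $L^2(S^1)$, to $e^{\hat\tau(\theta)} = e^{i\theta}$. Transferring back through the Fourier isometry, the partial sums $\sum_{n=0}^N \tau^n/n!$ converge in $\mathcal{P}_2$ to the element whose Fourier transform is $e^{i\theta}$, which is precisely $q$.

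The main obstacle is not analytic but definitional: since $\tau \in \ell^2 \setminus \ell^1$ (its Fourier coefficients decay like $1/n$), convolution of $\tau$ with itself is not automatic in $\ell^2(\mathbb{Z})$, and the powers $\tau^n$ appearing in $e^\tau$ must be \emph{defined} through the Fourier side --- as the unique $\ell^2$-element whose transform is $(\hat\tau)^n \in L^\infty \subset L^2$. Once one commits to this (well-justified) interpretation of the convolution algebra, the identity $e^\tau = q$ reduces, via Parseval, to the tautology $e^{i\theta} = e^{i\theta}$ combined with the sawtooth Fourier-series computation above.
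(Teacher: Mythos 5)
Your proposal is correct and follows essentially the same route as the paper: identify $\tau$ with the sawtooth $i\theta$ via its Fourier coefficients, use Parseval's theorem to turn convolution powers into the pointwise powers $(i\theta)^m$, and sum the exponential series to obtain $e^{i\theta}$, i.e.\ $q$. The only difference is packaging: the paper proves the key identity $c_n(\tau^m)=\frac{1}{2\pi}\int_{-\pi}^{\pi}e^{-in\theta}(i\theta)^m\,d\theta$ coefficient-by-coefficient by an inductive application of Parseval and then interchanges sum and integral via Fubini, whereas you work globally in $L^2(S^1)$ and use uniform convergence of the exponential series on the bounded range of the sawtooth (which, incidentally, gives the $L^2$ convergence of the partial sums a bit more directly).
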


This is a shorthand for saying first that the convolutions $\tau,
\tau^2, \tau^3, \ldots$ all lie in $L^2(\mathbb{Z})$ and second that
the sum $1 + \tau + \tau^2/2! + \ldots$ converges to $q\in
L^2(\mathbb{Z})$.  To prove this result we use Fourier series and
Parseval's theorem.

For any integrable function $f$ defined on $[-\pi, \pi]$ we as usual
let
\begin{equation*}
  c_n(f) = \frac{1}{2\pi}\int_{-\pi}^{\pi} e^{-in\theta}f(\theta)\,d\theta
\end{equation*}
denote the $n$-th complex Fourier coefficient of $f$.  We now state

\begin{theorem}[Parseval's theorem]
Let $A(x)$ and $B(x)$ be integrable functions on $[-\pi, \pi]$ with
complex Fourier coefficients $a_n$ and $b_n$.  Then
\begin{equation*}
\sum_{-\infty}^\infty
a_n\overline{b_n} = \frac{1}{2\pi}\int_{-\pi}^\pi
A(x)\overline{B(x)}\,dx\>.
\end{equation*}
\end{theorem}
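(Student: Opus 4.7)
The plan is to reduce the statement to the Plancherel identity $\sum |a_n|^2 = \frac{1}{2\pi}\int |A|^2\,dx$ (the diagonal case $A = B$) and then prove Plancherel by checking it on trigonometric polynomials and extending by density in $L^2[-\pi,\pi]$. Implicitly I read ``integrable'' in the hypothesis as $L^2$, since that is what is needed for the series on the left to converge.

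First I would observe that both sides of the desired equality are sesquilinear in the pair $(A, B)$, so the polarization identity — expressing an inner product as a signed sum of four norm-squareds — reduces the claim to the case $A = B$. Next I would verify the identity directly on a trigonometric polynomial $A(x) = \sum_{|n|\leq N} a_n e^{inx}$ using orthogonality of the exponentials, $\frac{1}{2\pi}\int_{-\pi}^{\pi} e^{imx}\overline{e^{inx}}\,dx = \delta_{mn}$, which immediately gives $\frac{1}{2\pi}\int |A|^2\,dx = \sum |a_n|^2$. In parallel, expanding the non-negative quantity
\begin{equation*}
\frac{1}{2\pi}\int_{-\pi}^{\pi} \Bigl|A(x) - \sum_{|n|\leq N} a_n e^{inx}\Bigr|^2 dx \;\geq\; 0
\end{equation*}
yields Bessel's inequality $\sum_{|n|\leq N} |a_n|^2 \leq \frac{1}{2\pi}\int |A|^2$, which shows that the Fourier coefficient map $A \mapsto (a_n)$ is a bounded linear contraction $L^2[-\pi,\pi] \to \ell^2(\mathbb{Z})$.

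The main obstacle is the density step: one must show that the trigonometric polynomials are dense in $L^2[-\pi,\pi]$, so that the identity on this dense subspace together with continuity of both sides extends to all of $L^2$. I would obtain this via Fejér's theorem — the Cesàro means of the Fourier partial sums of a continuous periodic function converge to it uniformly — combined with the standard fact that continuous periodic functions are dense in $L^2[-\pi,\pi]$. Once density is in hand, a routine three-$\epsilon$ argument (using Bessel on the error and the already-established identity on trigonometric polynomial approximants) upgrades Plancherel to all of $L^2[-\pi,\pi]$, and polarization then delivers Parseval in the bilinear form stated in the theorem.
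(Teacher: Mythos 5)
The paper does not prove this statement: Parseval's theorem is quoted as a known classical result and then used as a black box in the proofs of Theorems 2.1 and 2.3, so there is no ``paper's own proof'' to compare against. Judged on its own, your argument is the standard and correct one: polarization reduces the sesquilinear identity to the diagonal (Plancherel) case; orthogonality verifies it on trigonometric polynomials; Bessel's inequality gives boundedness of the coefficient map into $\ell^2(\mathbb{Z})$; and Fej\'er's theorem supplies density of trigonometric polynomials in $L^2[-\pi,\pi]$, after which continuity of both sides closes the gap. All the ingredients you name are the right ones and the three-$\epsilon$ step goes through.

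One point you handled correctly and that is worth making explicit: as literally stated, with $A$ and $B$ merely integrable, the theorem is false --- there are $L^1$ functions whose Fourier coefficients are not square-summable, so the left-hand side need not even converge. Your decision to read ``integrable'' as $L^2$ is the necessary repair, and it is harmless for the paper's application, since the functions actually used there, $(i\theta)^{m-1}$ and $i\theta e^{-in\theta}$ on $[-\pi,\pi]$, are bounded and hence in $L^2$. If you wanted a proof matching the stated $L^1$ hypothesis you would have to weaken the conclusion (e.g.\ to Abel or Ces\`aro summability of the series), but for the purposes of this paper the $L^2$ version is the right theorem to prove.
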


For the function $f(\theta)=\theta$ we have
\begin{align*}
  c_n(f) &= \frac{1}{2\pi} \int_{-\pi}^\pi e^{-in\theta}\theta\, d\theta
  = \left.\frac{i}{2n\pi}e^{-in\theta}\theta\right|_{-\pi}^{\pi}
  - \frac{i}{2n\pi}\int_{-\pi}^\pi e^{-in\theta}\, d\theta
% TODO: make perfectly clear?
  \\
  &=\frac{i(-1)^{n}}{n}
\end{align*}
for $n\ne 0$, while $c_0(f) = \int_{-\pi}^\pi\theta\,d\theta = 0$.
Thus, as a series $\tau$ is the Fourier transform of $i\theta$.

We can extend this result as follows (the proof will come later).  For
$\psi$ in $L^2(\mathbb{Z})$ we use $c_n(\psi)$ to denote $\psi_n$,
which we also interpret as the coefficient of $q^n$.

\begin{theorem}
\label{thm:c_n(tau^m)}
\begin{equation*}
  c_n(\tau^m) = \frac{1}{2\pi}\int_{-\pi}^{\pi}\> e^{-in\theta}\>(i\theta)^m \> d\theta
\end{equation*}
\end{theorem}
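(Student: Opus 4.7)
The plan is to exploit the unitary Fourier isomorphism $\ell^2(\mathbb{Z}) \cong L^2([-\pi,\pi])$ and to argue by induction on $m$, the key observation being that $\tau$ corresponds to the \emph{bounded} function $i\theta$. Under this isomorphism, convolution of sequences matches pointwise multiplication of functions (at least whenever the product stays in $L^2$), so one expects the $m$-fold convolutive power $\tau^m$ to correspond to $(i\theta)^m$, and the identity in the theorem then simply reads off the $n$-th Fourier coefficient on the function side.

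For the base case $m=1$, the computation of $c_n(\theta)$ given immediately before the theorem yields $c_n(i\theta) = i\cdot i(-1)^n/n = (-1)^{n+1}/n$ for $n \neq 0$ and $c_0(i\theta)=0$. A direct comparison with Definition~\ref{def:tau} shows $\tau_n = c_n(i\theta)$ for every $n \in \mathbb{Z}$, so that $\tau$ is indeed the Fourier coefficient sequence of $i\theta$.

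For the inductive step, assume $\tau^m \in \ell^2(\mathbb{Z})$ is the Fourier coefficient sequence of $(i\theta)^m$, and note that $(i\theta)^m \in L^\infty \subset L^2$. Then $(i\theta)^{m+1} = (i\theta)(i\theta)^m$ is again bounded, so its Fourier coefficients form an $\ell^2$ sequence. Applying Parseval's theorem in the guise of the convolution identity for Fourier coefficients of a product gives
\begin{equation*}
c_n\bigl((i\theta)^{m+1}\bigr) \;=\; (\tau \ast \tau^m)_n \;=\; (\tau^{m+1})_n,
\end{equation*}
which simultaneously shows that the convolution $\tau^{m+1} := \tau \ast \tau^m$ is a well-defined element of $\ell^2(\mathbb{Z})$ and that it is the Fourier coefficient sequence of $(i\theta)^{m+1}$. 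Unpacking the definition of Fourier coefficient on the left-hand side then yields the integral formula claimed in the theorem.

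The main obstacle is that arbitrary convolutions of $\ell^2$ sequences need not be $\ell^2$, so the induction has to carry both the $\ell^2$-membership of $\tau^m$ and its functional interpretation at once. The crucial point is that one factor is always $\tau$, whose Fourier transform $i\theta$ lies in $L^\infty$; the pairing $L^\infty \cdot L^2 \subset L^2$ on the function side is what keeps each stage of the induction inside $\ell^2$. At the level of coefficients, this is reflected in the Cauchy--Schwarz estimate $|(\tau \ast \tau^m)_n| \leq \|\tau\|_2 \|\tau^m\|_2$, which ensures absolute convergence of each individual convolution sum, together with the global $\ell^2$ bound obtained from Parseval applied to the bounded function $(i\theta)^{m+1}$.
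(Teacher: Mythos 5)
Your proof is correct and follows essentially the same route as the paper: induction on $m$, with Parseval's theorem applied to the product $(i\theta)^{m-1}\cdot(i\theta)$ to identify the convolution $\tau^{m-1}*\tau$ with the Fourier coefficients of $(i\theta)^m$. If anything, you are more explicit than the paper about why each stage stays in $\ell^2(\mathbb{Z})$ (boundedness of $i\theta$ and the Cauchy--Schwarz estimate), which is a welcome clarification rather than a divergence.
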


%% For some reason, double bracing required here.
\begin{proof}[{{Proof of Theorem \ref{thm:q=exp(tau)}}}]
The algebraic part of the proof, which relies on
Theorem~\ref{thm:c_n(tau^m)}, is
\begin{align*}
  c_n(\exp(\tau)) &= \sum \frac{c_n(\tau_m)}{m!}\\
  &= \frac{1}{2\pi}\sum\int_{-\pi}^{\pi}e^{-in\theta}\frac{(i\theta)^m}{m!}\, d\theta\\
  &= \frac{1}{2\pi}\int_{-\pi}^{\pi}e^{-in\theta}\sum \frac{(i\theta)^m}{m!}\, d\theta\\
  &= \frac{1}{2\pi}\int_{-\pi}^{\pi}e^{-in\theta} e^{i\theta}\, d\theta\\
  &= \frac{1}{2\pi}\int_{-\pi}^{\pi}e^{i(1-n)\theta}\, d\theta
\end{align*}
and hence $c_1=1$ and $c_n=0$ otherwise.  The analytic part is that
the sum-integral is absolutely convergent and so, by Fubini's theorem,
we can perform the integration first (which then allows us to simplify
the sum).
\end{proof}

\begin{proof}[{{Proof of Theorem \ref{thm:c_n(tau^m)}}}]
We rewrite the result to be proved as
\begin{equation*}
  c_n(\tau^m) = \frac{1}{2\pi}\int_{-\pi}^{\pi}\> 
  (i\theta)^{m-1}
  \times
  (i\theta)
  \>
  e^{-in\theta}
  \>
  d\theta
\end{equation*}
and apply Parseval's theorem with $A=(i\theta)^{m-1}$ and $B=
\overline{i\theta e^{-in\theta}}$ (and an induction hypothesis). This
tells us that the right hand side is equal to
$
  \sum c_k(\tau^{m-1})\overline{c_k(B)}
$
and as
\begin{equation*}
  c_k(\overline{i\theta e^{-in\theta}}) 
  = \frac{1}{2\pi}\int_{-\pi}^{\pi}\overline{i\theta} e^{in\theta}e^{-ik\theta}d\theta
  = c_{n-k}(\tau)
\end{equation*}
the result follows.
\end{proof}

\section{Taking values in $\mathcal{P}_m$, not $V_n/V_{n+1}$}

Here we discuss how to extend the main result to $P_3$.  This will
also help us understand better the result for~$P_2$.  Prior knowledge
of Bar-Natan's paper~\cite{bn:vki} would help the reader.  Here we
outline the standard construction, but draw attention to differences.
Recall that the Vassiliev-Kontsevich invariants can be evaluated by
gluing together slice contributions. In $P_2$ each slice is
effectively the same as any other, and $P_2$ is commutative.  This
makes the definition of $\tau$ quite simple.

For $P_3$ the r\^ole of the slice is not so clear.  We presented
$e^\tau=q$ as a calculation of $q$ from its Vassiliev-Kontsevich
invariants.  (This is the \emph{inverse problem} to computing $b_n$
from $b$.)  To compute $b_1(b)$ of a braid $b$ one divides $[0, 1]$
into slices and sum the contributions made by each slice.  This
contribution uses $\int dt/(z_1-z_2)$ to measure the twist in the
slice.  But we want, for example, $b_1(q)$ to be $\tau$.  This can be
obtained by adding a factor of $\tau$ to the integrand.  However, this
factor must be introduced geometrically, as a slice contribution
(see Figure~\ref{fig:tau}).

\begin{figure}[htp]
\includegraphics[height=0.4\vsize]{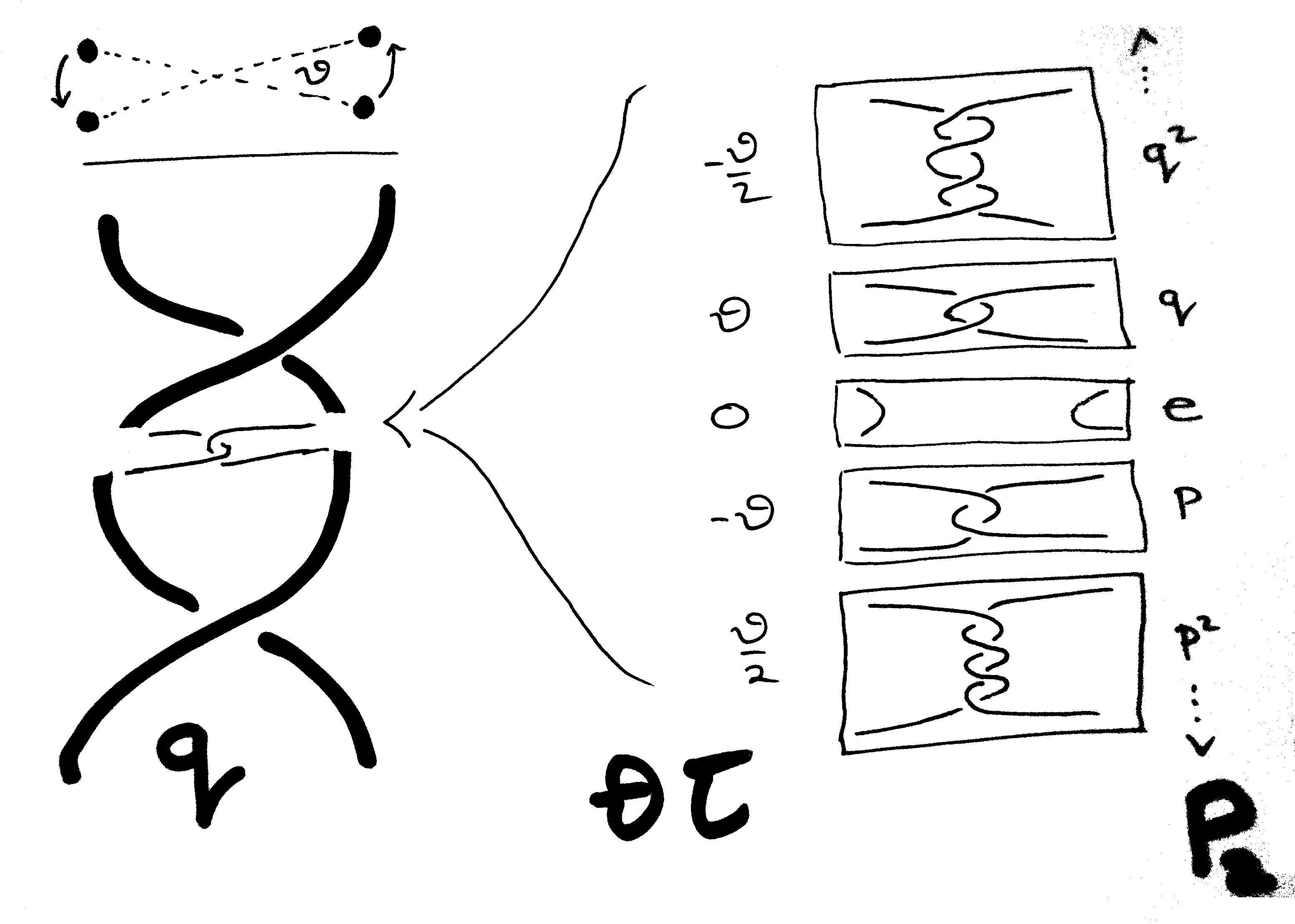}
\caption{$e^\tau = q$.}\label{fig:tau}
\end{figure}

In the usual Kontesevich definition each $b_n$ lies in a finite
dimensional vector space, which can be taken to be the quotient
$V_n/V_{n+1}$ in the Vassiliev `braids with $n$ double points'
filtration of the vector space of finite formal sums of braids (also
known as the group ring).  

\newpage

\begin{figure}[htp]
\includegraphics[height=0.3\vsize]{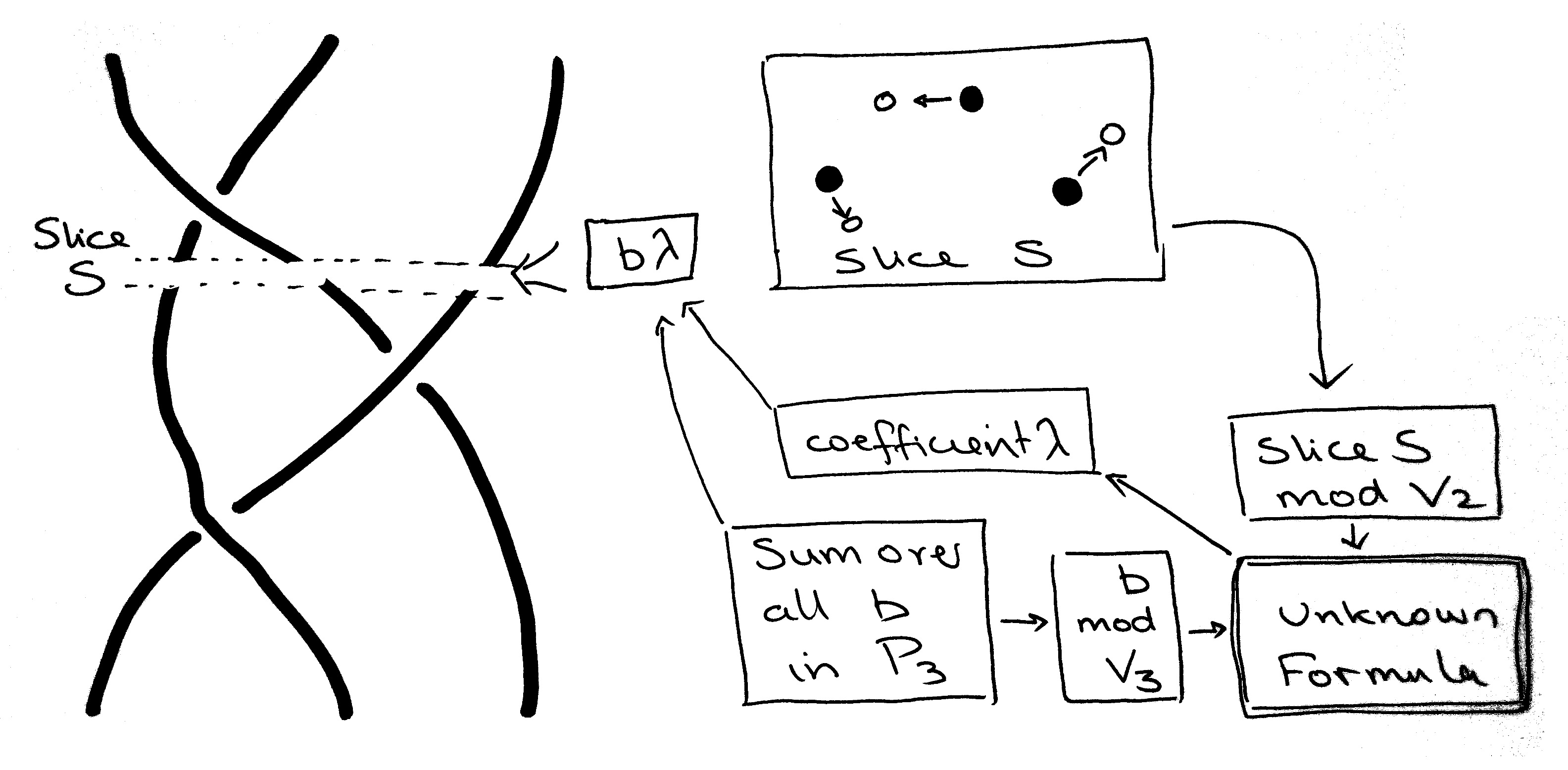}
\caption[]{$b = \sum b_n$?  Each slice contribution contains all
  pure braid modifications, with coefficient $\lambda$. }\label{fig:mult}
\end{figure}

Now suppose we have a slice $S$ with $m\geq 3$ strands in it.  Each
relative motion between a pair of strands contributes to the slice.
In the usual definition the integrand value space for this
contribution is $V_1/V_2$.  To achieve $k=\sum k_n$ we require
$\mathcal{S}\cong S*\mathcal{P}_m$ as the value space~(see
Figure~\ref{fig:mult}).  This is an important difference.

When the value space is $V_1/V_2$ we can ignore the other strands when
we compute the contribution made by a pair.  But some simple examples
(not given here) show that when $\mathcal{S}$ is the value space we
have to link in the other strands, and it seems likely that
\emph{every} element of $S*P_m$ will so appear.

Note that in $\mathcal{P}_2$ the difference $q^0 - 1/n\sum_1^nq^i$
lies in the Vassiliev subspace $V_1$ and in $\mathcal{P}_2$ the
corresponding sequence converges to $q^0$.  The same argument also
shows that in $\mathcal{K}$ and $\mathcal{P}_m$ the Vassiliev
subspaces are dense.  The Kontsevich invariants are an analogue of
differentiation, which is well known not to be a continuous operator
on $L^2$ spaces.

\section{Problems}

Here we state some problems related to proving $k =\sum k_n$.

\begin{problem}
Suppose we have a slice $S$ with $m$ strands.  What is the
contribution, which lies in $\mathcal{S}\cong S*\mathcal{P}_m$, of that
slice?  In particular, for each $b \in P_m$ what is coefficient of $b$
in the slice contribution?
\end{problem}

Note that $b$ is a member of a braid group, while $S$ is (part of) the
realisation of a braid.  Here the difference is important.  We have
already solved this problem, in the case of two strands.  Let $\theta$
be the twisting or `fractional winding number' of the two strands and
let $b$ be an element of $P_2$.  We know that if $b=q^n$ then the
contribution is $\theta \times (-1)^{n+1}/n \times b$.  To extend the
main result to $P_3$ we need a similar formula for each $b\in P_3$.

\begin{problem}
Suppose slice $S$ has three strands and $b$ is in $P_3$.  Produce a
formula that depends on the pairwise twisting in $S$ and also say
$b_1(b)\in V_1/V_2$ and $b_2(b)\in V_2/V_3$ that generalises the
two-strand case.  (See Figure~\ref{fig:mult}.)
\end{problem}

Here is a hint.  One might expect the twistyness $\theta$ in the slice
to be divided into parts, with each part going to some $b$ in $P_3$.
In particular, might be looking to solve $\sum c(b) = \theta$, where
the sum is over the elements $b$ of $P_3$.  However, in the case of
$P_2$ the twistyness of $q_n$ is (of course) $n$, and so the
corresponding sum is $\sum_{n=1}^\infty (-1)^{n+1}(n - (-n))/n =
(1-1)-(1-1)+ \ldots$ which imay be best thought of as $-2\zeta(0)$,
where $\zeta$ is the analytic continuation of $\sum_1^\infty n^{-s}$,
and $\zeta(0)=-1/2$.  (Similarly, naively applying $b_m(q^n) = n^m/m!
(b_1(q)^m$ to $\tau$ leads to the divergent sum $\sum n^{m-1}$.)

There is in addition a constraint.  A realisation in $\mathbb{R}^3$ of
a braid $b$ on $n$ strands can be deformed into another realisation.
This should not change the value of say $b_2(b)\in\mathcal{S}$.  When
$b_2$ takes values in $V_2/V_3$ this is a consequence of the integrand
satisfying the Arnold identity~\cite[\S4.2]{bn:vki}.  When we use
$\mathcal{S}$ this makes this constraint considerably more exacting.
It seems to require every element of $P_m$ to appear.  We can add
critical points to the representation of a knot by adding an
\emph{N}-shaped kink in to a vertical line.  This does not, of course,
change knot invariants.  Thus, in addition to the slices, the critical
points also contribute.  Bar-Natan \etal{}~\cite{bn-etal:wheels} have
found an explicit formula for this contribution, when values are taken
in $V_n/V_{n+1}$.  They call this `wheeling', from the shape of some
diagrams used.

\begin{problem}
Extend wheeling so that it work for $\mathcal{K}$.\label{problem:wheeling}
\end{problem}

Here are two braid group questions.

\begin{problem}
Let $a$ and $b$ be any two elements in $\mathcal{P}_3$, the space of
$L^2$ formal sums of braids in $P_3$.  Is the product $a * b$
absolutely convergent?
\end{problem}

\begin{problem}
Is there a Plancherel theorem for $P_3$?
\end{problem}

Here are two more general questions.

\begin{problem}
Drinfeld's associator~\cite{drin:qtqh} is an alternative to the Kontsevich's
integral approach. Is there a way of refining it to produce values in
$\mathcal{P}_n$?
\end{problem}

\begin{problem}
Is $k = \sum k_n$ a new connection between the mathematics of knots
and quantum field theory?
\end{problem}

\section{Summary}

We saw that the problem $k=\sum k_n$ for knots leads to solving
$e^\tau=q$, whose solution relies on Parseval's theorem.  There is a
local description of this, in terms of the contribution made by the
slices in the Kontsevich integral. If each slice $S$ made a suitable
contribution lying in the braid modifications $\mathcal{S}\cong
S*\mathcal{P}_m$ of $S$ we would have $b=\sum b_n$ for
braids. Further, if the wheeling at critical points can be similarly
extended, then we have $k = \sum k_n$ for knots.  In the previous
section we described some problems that would need to be solved, for
this program to be carried out.

\acks 

I thank Phil Rippon for help with the proof of
Theorem~\ref{thm:q=exp(tau)}, and Joel Fine for reading an earlier
version of this paper. Any remaining errors are mine.

\bibliographystyle{amsplain}
\bibliography{braids}

\end{document}